\documentclass[12pt]{article}
\usepackage{hyperref}
\usepackage{amsthm}
\usepackage{enumitem}
\usepackage{amssymb,amsmath,makeidx,verbatim}
\hoffset = -0.05 truecm
\usepackage{makeidx}	
\makeindex
\usepackage{graphicx}
\newcommand{\be}{\begin{enumerate}}
\newcommand{\ee}{\end{enumerate}}
\newcommand{\bq}{\begin{eqnarray*}}
\newcommand{\eq}{\end{eqnarray*}}

\numberwithin{equation}{section}
\newtheorem{theorem}{Theorem}[section]
\newtheorem{corollary}[theorem]{Corollary}

\newtheorem{lemma}[theorem]{Lemma}

\newtheorem{proposition}[theorem]{Proposition}
\textwidth = 6.0 in
\textheight = 9 in
\oddsidemargin = 0.5 in
\evensidemargin = 0.0 cm
\topmargin = 0.0 cm
\headheight = 0.0 cm
\headsep = 0.0 in
\parskip = 0.0in
\parindent = 0.2in
\begin{document}
\date{}
\baselineskip 14pt
\newcommand{\disp}{\displaystyle}
\thispagestyle{empty}
\title{Differential Harnack estimates for conjugate heat equation under  the  Ricci flow}
\author{Abimbola Abolarinwa\thanks{Department of Mathematics,  University of Sussex, Brighton, BN1 9QH, United Kingdom.}  \thanks{E-mail: a.abolarinwa@sussex.ac.uk}}
\maketitle

\begin{abstract}
We prove certain localized and global differential Harnack inequality for all positive solutions to the geometric conjugate heat equation coupled to the forward  in time Ricci flow. In this case,  the diffusion operator is perturbed with the curvature operator,  precisely, the Laplace-Beltrami operator is replaced with "$ \Delta - R(x,t)$", where $R$ is the scalar curvature of the Ricci flow, which is  well generalised  to the case of nonlinear heat equation with potential. Our estimates improve on some well known results by weakening the curvature constraints. As a by product,  we obtain some Li-Yau type differential  Harnack estimate. The localized version of our estimate is very useful in extending the results obtained to noncampact case.
\paragraph{Keywords:}Ricci Flow, Conjugate Heat Equation, Harnack inequality, Gradient Estimate, Laplace-Beltrami operator,  Laplacian Comparison Theorem.
\paragraph {2010 Mathematics Subject Classification:}  35K08, 53C44, 58J35 58J60
\end{abstract} 

\section{Introduction} 
 Let $M$ be an $n$-dimensional compact (or noncompact without boundary) manifold on which a one parameter family of Riemannian metrics $g(t), t \in [0, T)$ is defined. We say $(M, g(t))$ is a solution to the Ricci flow if it is evolving by the following nonlinear weakly parabolic partial differential equation
\begin{equation}\label{eq11}
\frac{\partial }{ \partial t} g (x, t)  = - 2 Ric(x,t),  \hspace{2cm} (x, t) \in M \times [0, T)
\end{equation}
with $g(x, 0) = g(0)$, where $Ric$ is the Ricci curvature and $T \leq \infty$. By the positive solution to the heat equation on the manifold, we mean a smooth function atleast $C^2$ in $x$ and $C^1$ in $t$,  $ u \in C^{2, 1}(M \times [0,T])$  which satisfies the following equation 
\begin{equation}\label{eq12}
\Big( \Delta - \frac{\partial }{ \partial t} \Big) u( x, t) = 0,  \hspace{2cm} (x, t) \in M \times [0, T],
\end{equation}
where the symbol $ \Delta $ is the Laplace-Beltrami operator acting on function in space  with respect to metric $g(t)$ in time. We can couple the Ricci flow (\ref{eq11}) to  equation of the form (\ref{eq12}),  either forward, backward, or perturbed with some potential function, see the author's papers \cite{[Ab1],[Ab2]} for more details. In the present we consider a more generalized situation where the heat equation is perturbed, in this case, the Laplacian is replaced with $ \Delta - R(x,t)$, where $R$ is the scalar curvature of the Ricci flow $g(t)$ and we obtain some Harnack and gradient estimates on the logarithm of the positive solutions.  The author also obtained various estimates on positive solutions and fundamental solution in \cite{[Ab3]}. Throughout, we assume, that the manifold is endowed with bounded curvature, we remark that boundedness and nonnegativity of the curvature is preserved as long as Ricci flow exists \cite{[CLN06]}.

Heat equation coupled to the Ricci flow can be associated with some physical interpretation in terms of heat conduction process. Precisely, the manifold $M$ with initial metric $g(x, 0)$ can be thought of as having the temperature distribution $u(x, 0)$ at $ t = 0.$ If we now allow the manifold to evolve under the Ricci flow and simultaneously allow the heat to diffuse on $M$, then, the solution $u(x, t)$ will represent the space-time temperature on $M$. Moreover, if $u(x, t)$ approaches $\delta$-function at the initial time, we know that $u(x,t) >0$, this gives another physical interpretation that temperature is always positive, whence we can consider the potential $ f = \log u$ as an entropy or unit mass of heat supplied and the local production entropy is given by $ | \nabla f|^2 = \frac{ |\nabla u|^2}{u^2} $.

Harnack inequalities are indeed very powerful tools in geometric analysis. The paper of Li and Yau \cite{[LY86]} paved way for the rigorous studies and many interesting applications of Harnack inequalities. They derived gradient estimates for positive solutions to the heat operator defined on  closed manifold with bounded Ricci curvature and from where their Harnack inequalities follows. These inequalities were in turn used to establish various lower and upper bounds on the heat kernel. They also studied manifold satisfying Dirichlet and Neumann conditions.  On the other hand, Perelman in \cite{[Pe02]} obtained differential Harnack   estimate for the fundamental solution to the conjugate heat equation on compact manifold evolving by the  Ricci flow. Perelman's results  are unprecedented as they play a key factor in the proof of Poincar\'e conjecture. Meanwhile, shortly before Perelman's paper appeared online, C. Guenther \cite{[Gu02]} had found gradient estimates for positive solutions to the heat equation under the Ricci flow by adapting the methodology of Bakry and Qian \cite{[BQ99]} to time dependent metric case. As an application of her results, she got a Harnack-type inequality and obtain a lower bound for fundamental solutions. These techniques were first brought into the study of Ricci flow by R. Hamilton, see \cite{[Ha93a]} for instance. 
As useful  as Harnack inequalities are, they have also been discovered in other geometric flows;  See the following- H-D. Cao \cite{[CaNi]} for heat equation on K\"ahler manifolds, B. Chow \cite{[Ch91b]} for Gaussian curvature flow and \cite{[Ch92]} for Yamabe flow,  also B. Chow and R. Hamilton \cite{[CH97]}, and R. Hamilton \cite{[Ha95b]} on mean curvature flow. 
The following references among many others are found relevant \cite{[Ab4],[Cao08],[Gu02],[Zh06]}. See also the following monographs \cite{[SY94]} on Gradient estimates and \cite{[CCG$^+$07],[CK04],[CLN06],[Mu06]} for theory and application of  Ricci flow.

Recently, \cite{[BCP]} and \cite{[KZh08]} have extended results in \cite{[Zh06]} to heat equation and its conjugate respectively. We remark that our results are similar to those of \cite{[KZh08]} but with different approach, the application to heat conduction that we have in mind has greatly motivated our approach. The detail descriptions of our results are presented in Sections 2 and 3 (localized version), while we collect some elements of the Ricci flow  used in our calculation as an appendix in the last section.
 \section{Estimates on Positive Solutions to the Conjugate Heat Equation}\label{sec2}
Let $\square := \partial_t  - \Delta $ be the heat operator acting on functions $u: M \times [0, T] \rightarrow \mathbb{R}$, where $M \times [0, T]$ is endowed with the volume form $ d \mu(x) dt $. The conjugate to the heat operator $ \Gamma$ is defined by 
\begin{equation}\label{Ceq1}
\square^* = - \partial_t - \Delta_x + R,
\end{equation}
where $R$ is the scalar curvature.
We remark that for any solution $g(t), t \in  [0, T]$ to the Ricci flow and smooth functions $u, v : M \times [0, T] \rightarrow \mathbb{R}$, the following identity holds
\begin{equation}
\int_0^T \int_M ( \square u) v d \mu(x) dt = \int_0^T \int_M u( \square^* v) d \mu(x) dt.
\end{equation}
By direct application of integration by parts with the fact that the functions $u$ and $v$ are $C^2$ with compact support (or if  $M$ is compact) and using evolution of $d \mu$ under the Ricci flow the last identity can be shown easily.
 In a special case $u \equiv 1$, we have 
 $$ \frac{d}{dt} \int_M v d\mu = - \int_M \square^* d\mu. $$
 
 \begin{proposition}\label{prop21}
 Let $u = (4 \pi \tau )^{-\frac{n}{2}} e^{-f}$ be a positive solution to the conjugate heat equation. The evolution equation
 \begin{equation}
 \frac{\partial f}{ \partial t} = - \Delta f + | \nabla f|^2 - R + \frac{n}{ 2 \tau} 
 \end{equation}
 is equivalent to the following evolution 
 \begin{equation}
 \square^* u = 0. 
 \end{equation}
 \end{proposition}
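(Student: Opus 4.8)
The plan is to verify the equivalence by direct substitution of the ansatz $u = (4\pi\tau)^{-\frac n2} e^{-f}$ into the definition $\square^* u = -\partial_t u - \Delta u + R u$, and then to divide through by $u$. Since $u$ is a positive solution, $u>0$ everywhere, so dividing by $u$ is legitimate and reversible; this is what upgrades the implication to a genuine equivalence, because each manipulation below is an exact identity rather than a one-way estimate. Throughout I adopt the standard backward-time convention $\partial_t \tau = -1$ (i.e.\ $\tau = T-t$), which is precisely the choice that produces the term $+\frac{n}{2\tau}$ with the correct sign.

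First I would record the spatial derivatives, computed with respect to the time-dependent metric $g(t)$ at a fixed instant. Writing $\log u = -\frac n2 \log(4\pi\tau) - f$ and noting that the normalization factor is constant in $x$, one gets $\nabla u = -u\,\nabla f$, and hence
\[
\Delta u = \mathrm{div}(\nabla u) = \mathrm{div}(-u\,\nabla f) = u\,|\nabla f|^2 - u\,\Delta f = u\bigl(|\nabla f|^2 - \Delta f\bigr).
\]
Next I would compute the time derivative, treating the two sources of $t$-dependence separately. Differentiating the explicit factor $(4\pi\tau)^{-\frac n2}$ and using $\partial_t\tau = -1$ contributes $\frac{n}{2\tau}u$, while differentiating $e^{-f}$ contributes $-u\,\partial_t f$, so that $\partial_t u = u\bigl(\frac{n}{2\tau} - \partial_t f\bigr)$.

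Substituting these two expressions into the conjugate operator and factoring out $u$ then yields
\[
\frac{\square^* u}{u} = -\Bigl(\frac{n}{2\tau} - \partial_t f\Bigr) - \bigl(|\nabla f|^2 - \Delta f\bigr) + R = \partial_t f + \Delta f - |\nabla f|^2 + R - \frac{n}{2\tau}.
\]
Since $u>0$, the vanishing of $\square^* u$ is equivalent to the vanishing of the right-hand side, which rearranges exactly to $\partial_t f = -\Delta f + |\nabla f|^2 - R + \frac{n}{2\tau}$, the claimed evolution equation. The only step requiring genuine care is the bookkeeping of signs in $\partial_t u$, specifically the contribution of the time derivative of the scale factor through $\partial_t\tau = -1$; everything else is routine differentiation. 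I do not expect any analytic obstacle, as the statement is a pointwise algebraic identity and no integration by parts or curvature evolution (beyond the explicit appearance of $R$ in $\square^*$) enters this particular computation.
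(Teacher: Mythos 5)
Your computation is correct and follows essentially the same route as the paper: direct substitution of the ansatz into $\square^* u = -\partial_t u - \Delta u + Ru$, using $\partial_t \tau = -1$ to get $\partial_t u = u(\tfrac{n}{2\tau} - \partial_t f)$ and $\Delta u = u(|\nabla f|^2 - \Delta f)$, then dividing by $u>0$. The paper's proof is the same pointwise algebraic identity; your remark that positivity of $u$ is what makes the implication reversible matches the paper's closing observation.
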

 
 \begin{proof}
 $$  \square^* u = (- \partial_t - \Delta_x + R)(4 \pi \tau )^{-\frac{n}{2}} e^{-f}.$$
 By direct calculation, it follows that
 $$ \partial_t [(4 \pi \tau )^{-\frac{n}{2}} e^{-f}] = ( \frac{n}{ 2\tau}- \partial_t f ) (4 \pi \tau )^{-\frac{n}{2}} e^{-f} $$
 $$ \Delta [(4 \pi \tau )^{-\frac{n}{2}} e^{-f}] =( - \Delta f + | \nabla f|^2 )(4 \pi \tau )^{-\frac{n}{2}} e^{-f}.$$
 Then
 $$\square^* u = \Big( -\frac{n}{ 2\tau}+ \partial_t f + \Delta f - | \nabla f|^2 + R \Big)u = 0, $$
 where we have made use of $ \partial_t \tau = -1$ and since $ u >0$, the claimed is then proved.
 \end{proof}

 Let $(M, g(t)), t \in [0,T]$ be a solution of the Ricci flow on a closed manifold. Here $T > 0$ is taken to be the maximum time of existence for the flow. Let $u$ be a positive solution to the conjugate heat equation, then we have the following coupled system.
  \begin{equation}
 \left \{ \begin{array}{l}\label{Heqn1}
 \displaystyle \frac{\partial g_{ij}}{\partial t} = - 2 R_{ij}  \\
 \ \\
 \displaystyle - \frac{\partial u}{\partial t} - \Delta_{g(t)} u + R_{g(t)} u = 0,
 \end{array} \right. 
  \end{equation}  
which we refer to as Perelman's conjugate heat equation coupled to the Ricci flow. We will prove differential Harnack and gradient estimates for all positive solutions of the conjugate heat equation in the above system. 
A differential Harnack estimate of Li-Yau type yields a space-time gradient estimate for a positive solution to a heat-type equation, which when integrated compares the solution at different points in space and time. We will later apply the maximum principle to obtain a localized version of  the estimates.

\subsection{Main Result I. (Differential Harnack Inequality and Gradient Estimates)}
The main result of this subsection is contained in Theorem (\ref{thm Heqn1}) and as an application we arrived at Theorem (\ref{cor35}) which gives the corresponding Li-Yau type gradient estimate  for all positive solution to the conjugate heat equation in the system (\ref{Heqn1}).

\begin{theorem} \label{thm Heqn1}
Let $ u \in C^{2, 1}(M \times [0,T])$ be a positive solution to the conjugate heat equation $ \square^* u = ( - \partial_t - \Delta + R )u = 0$ and the metric $g(t)$ evolve by the Ricci flow in the interval $[0, T)$ on a closed manifold $M$ with nonnegative scalar curvature. Suppose further  that $u = ( 4 \pi \tau )^{-\frac{n}{2}} e^{-f}$, where $ \tau = T-t$, then for all points $(x,t) \in (M \times [0,T])$, we have the Harnack quantity
 \begin{equation}\label{Heqn2}
  P = 2 \Delta f -| \nabla f|^2 + R - \frac{ 2n}{\tau} \leq 0.
  \end{equation}
Then $P$ evolves as 
  \begin{equation}
  \frac{\partial }{\partial t} P = - \Delta P + 2 \langle \nabla f, \nabla P \rangle + 2 \Big| R_{ij} + \nabla_i \nabla_j f - \frac{1}{\tau} g_{ij} \Big|^2 + \frac{2}{\tau} P + \frac{2}{\tau} | \nabla f|^2 +  \frac{4n}{\tau^2}   + \frac{2}{\tau} R. 
  \end{equation}
  for all $ t > 0$. Moreover $P \leq 0$ for all $t \in [0, T].$ 
\end{theorem}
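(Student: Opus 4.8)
The plan is to prove this theorem in two stages: first establish the evolution equation for $P$, then deduce the sign $P\leq 0$ via the maximum principle. Setting $f$ by $u=(4\pi\tau)^{-n/2}e^{-f}$ with $\tau=T-t$, Proposition~\ref{prop21} already gives the governing equation $\partial_t f=-\Delta f+|\nabla f|^2-R+\tfrac{n}{2\tau}$. The computation of $\partial_t P$ is the substantive part. I would differentiate each of the three ingredients $\Delta f$, $|\nabla f|^2$, and $R$ in time, being careful that the Laplacian and the gradient are taken with respect to the evolving metric $g(t)$. The $t$-derivatives of the metric contribute Ricci curvature terms through the standard Ricci-flow variation formulas (which are collected in the appendix): $\partial_t(\Delta f)=\Delta(\partial_t f)+2R_{ij}\nabla_i\nabla_j f$ and $\partial_t|\nabla f|^2=2\langle\nabla f,\nabla\partial_t f\rangle+2R_{ij}\nabla_i f\nabla_j f$, together with the contracted second Bianchi-type evolution $\partial_t R=\Delta R+2|R_{ij}|^2$.

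\emph{Key steps.} First I would substitute the evolution equation for $f$ into $\partial_t(\Delta f)$ and $\partial_t|\nabla f|^2$, producing terms involving $\Delta^2 f$, $\Delta|\nabla f|^2$, $\langle\nabla f,\nabla\Delta f\rangle$, $\langle\nabla f,\nabla|\nabla f|^2\rangle$, $\Delta R$ and $\langle\nabla f,\nabla R\rangle$, plus the curvature corrections listed above. Second, I would invoke the Bochner formula $\tfrac12\Delta|\nabla f|^2=|\nabla^2 f|^2+\langle\nabla f,\nabla\Delta f\rangle+R_{ij}\nabla_i f\nabla_j f$ to trade the difficult fourth-order and mixed terms for the Hessian norm $|\nabla^2 f|^2$. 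Third, the goal is to reorganize everything into the claimed form; the appearance of the perfect-square term $2\,|R_{ij}+\nabla_i\nabla_j f-\tfrac1\tau g_{ij}|^2$ is the signpost: expanding that square yields $2|R_{ij}|^2+2|\nabla^2 f|^2+\tfrac{2n}{\tau^2}+4R_{ij}\nabla_i\nabla_j f-\tfrac4\tau\Delta f-\tfrac4\tau R$, and I would verify that precisely these combinations emerge from the curvature corrections and the Bochner identity. The remaining lower-order pieces must then collect into $-\Delta P+2\langle\nabla f,\nabla P\rangle+\tfrac2\tau P+\tfrac2\tau|\nabla f|^2+\tfrac{4n}{\tau^2}+\tfrac2\tau R$, which can be checked by substituting the definition $P=2\Delta f-|\nabla f|^2+R-\tfrac{2n}{\tau}$ back in and matching coefficients of each term.

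\emph{The sign assertion.} Once the evolution equation is in hand, I would argue $P\leq 0$ by the parabolic maximum principle applied backward from $t=T$ (i.e.\ $\tau\to 0$). The evolution $\partial_t P=-\Delta P+2\langle\nabla f,\nabla P\rangle+(\text{nonnegative terms})+\tfrac2\tau P$ has the shape $\partial_t P\geq -\Delta P+2\langle\nabla f,\nabla P\rangle+\tfrac2\tau P$ once one discards the manifestly nonnegative square, the $\tfrac2\tau|\nabla f|^2$ term, $\tfrac{4n}{\tau^2}\geq0$, and the term $\tfrac2\tau R\geq0$ (this last is exactly where the hypothesis $R\geq0$ is used). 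The crucial input is the initial/terminal behaviour: as $\tau\to0^+$ the solution concentrates like a heat kernel and $P$ is controlled so that $\tau^2 P$ stays bounded and $P$ is nonpositive near $t=T$; a standard barrier or the structure of the reaction term $\tfrac2\tau P$ (whose sign penalizes positivity as $\tau\to0$) forces any first positive maximum to be impossible. On a closed manifold there is no spatial boundary to worry about, so the maximum principle applies cleanly.

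\emph{Main obstacle.} I expect the bookkeeping in the evolution computation to be the principal difficulty: correctly tracking the metric time-dependence in $\Delta f$ and $|\nabla f|^2$ and ensuring the curvature corrections $2R_{ij}\nabla_i\nabla_j f$ and $2R_{ij}\nabla_i f\nabla_j f$ combine exactly with the Bochner terms and the $\partial_t R$ contribution to complete the square $2|R_{ij}+\nabla^2 f-\tfrac1\tau g|^2$ without leftover terms. A secondary subtlety is the rigorous justification of the maximum principle at the singular time $\tau=0$, where the coefficient $\tfrac2\tau$ blows up; handling this typically requires establishing the correct decay of $P$ as $t\to T$ via the explicit heat-kernel asymptotics rather than a naive application of the maximum principle.
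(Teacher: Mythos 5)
Your proposal follows essentially the same route as the paper: the evolution equation is obtained by differentiating $\Delta f$, $|\nabla f|^2$ and $R$ with the standard Ricci-flow variation formulas, invoking the Bochner identity, and completing the square to produce $2\,|R_{ij}+\nabla_i\nabla_j f-\tfrac1\tau g_{ij}|^2$, and the sign $P\leq 0$ is then deduced by discarding the nonnegative terms (using $R\geq 0$) and applying the parabolic maximum principle from $\tau\to 0^+$, exactly as in the paper. Your worry about heat-kernel asymptotics at $\tau=0$ is unnecessary under the stated hypothesis $u\in C^{2,1}(M\times[0,T])$ with $u>0$, since then $\Delta f$ and $|\nabla f|^2$ stay bounded near $t=T$ and the term $-\tfrac{2n}{\tau}$ forces $P<0$ for small $\tau$ directly.
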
  
  
Note that $u = ( 4 \pi \tau )^{-\frac{n}{2}} e^{-f}$ implies $ \ln u = - f - \frac{n}{2} \ln( 4 \pi \tau)$ and we can write (\ref{Heqn2}) as 
 \begin{equation}
 \frac{| \nabla u|^2}{ u^2} - 2 \frac{u_t}{u}- R - \frac{ 2n}{\tau} \leq 0,
 \end{equation}  
  which is similar to the celebrated Li-Yau \cite{[LY86]} gradient estimate for the heat equation on manifold with nonnegative Ricci curvature.
 
 We need the  usual routine computations as in the following;
 \begin{lemma}\label{lem332}
  Let $(g, f)$ solve the system (\ref{Heqn1}) above. Suppose further that $u = ( 4 \pi \tau )^{-\frac{n}{2}} e^{-f}$ with $ \tau = T -t$. Then we have 
  $$ (  \frac{\partial}{\partial t } + \Delta ) \Delta f = 2 R^{ij} \nabla_i \nabla_j f + \Delta | \nabla f|^2 - \Delta R$$
  and 
  $$(  \frac{\partial}{\partial t } + \Delta )| \nabla f|^2 = 4 R_{ij} \nabla_i f \nabla_j f + 2 \langle \nabla f, \nabla | \nabla f|^2 \rangle + 2 | \nabla \nabla f |^2 - 2 \langle \nabla f, |\nabla R|^2 \rangle. $$ 
  \end{lemma}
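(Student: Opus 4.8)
The plan is to treat the two identities in parallel, using only two inputs: the evolution equation for $f$ established in Proposition \ref{prop21}, namely $\partial_t f = -\Delta f + |\nabla f|^2 - R + \frac{n}{2\tau}$, and the standard commutator relation between the time derivative and the (time-dependent) Laplacian along the Ricci flow. For the latter I would first record that, since $\partial_t g^{ij} = 2R^{ij}$, differentiating $\Delta h = g^{ij}\nabla_i\nabla_j h$ in time gives $\partial_t(\Delta h) = \Delta(\partial_t h) + 2R^{ij}\nabla_i\nabla_j h - g^{ij}(\partial_t\Gamma^k_{ij})\nabla_k h$ for any function $h$. The final term vanishes: contracting $\partial_t\Gamma^k_{ij} = -g^{kl}(\nabla_i R_{jl} + \nabla_j R_{il} - \nabla_l R_{ij})$ with $g^{ij}$ and invoking the contracted second Bianchi identity $\nabla^j R_{jl} = \frac{1}{2}\nabla_l R$ shows $g^{ij}\partial_t\Gamma^k_{ij} = 0$. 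Hence $[\partial_t,\Delta]h = 2R^{ij}\nabla_i\nabla_j h$, which is the engine for both computations.

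For the first identity I would apply this commutator with $h = f$, so that $\partial_t(\Delta f) = \Delta(\partial_t f) + 2R^{ij}\nabla_i\nabla_j f$. Substituting the $f$-evolution and noting that the constant-in-space term $\frac{n}{2\tau}$ is annihilated by $\Delta$, one obtains $\Delta(\partial_t f) = -\Delta^2 f + \Delta|\nabla f|^2 - \Delta R$. Adding $\Delta(\Delta f) = \Delta^2 f$ then cancels the fourth-order terms and leaves exactly $(\partial_t + \Delta)\Delta f = 2R^{ij}\nabla_i\nabla_j f + \Delta|\nabla f|^2 - \Delta R$.

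For the second identity I would differentiate $|\nabla f|^2 = g^{ij}\nabla_i f\nabla_j f$ directly. Using $\partial_t g^{ij} = 2R^{ij}$ and $\partial_t(\nabla_j f) = \nabla_j(\partial_t f)$ yields $\partial_t|\nabla f|^2 = 2R^{ij}\nabla_i f\nabla_j f + 2\langle\nabla f,\nabla(\partial_t f)\rangle$, and inserting the $f$-evolution gives $2\langle\nabla f,\nabla(\partial_t f)\rangle = -2\langle\nabla f,\nabla\Delta f\rangle + 2\langle\nabla f,\nabla|\nabla f|^2\rangle - 2\langle\nabla f,\nabla R\rangle$. The remaining ingredient is the Bochner formula $\Delta|\nabla f|^2 = 2|\nabla\nabla f|^2 + 2\langle\nabla f,\nabla\Delta f\rangle + 2R_{ij}\nabla^i f\nabla^j f$. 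Adding this to $\partial_t|\nabla f|^2$, the two $\langle\nabla f,\nabla\Delta f\rangle$ contributions cancel and the two Ricci terms combine into $4R_{ij}\nabla_i f\nabla_j f$, producing the claimed expression (here the final term is understood as $-2\langle\nabla f,\nabla R\rangle$).

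The computations are routine once the commutator is in hand, so the only genuine subtlety, and the step I would verify most carefully, is the vanishing of the $g^{ij}\partial_t\Gamma^k_{ij}$ term via the contracted Bianchi identity; everything else is bookkeeping organized around the two cancellations, namely the $\Delta^2 f$ cancellation in the first identity and the $\langle\nabla f,\nabla\Delta f\rangle$ cancellation in the second.
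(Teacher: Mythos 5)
Your proposal is correct and follows essentially the same route as the paper: differentiate $\Delta f$ and $|\nabla f|^2$ in time using $\partial_t g^{ij}=2R^{ij}$, substitute the evolution $\partial_t f=-\Delta f+|\nabla f|^2-R+\frac{n}{2\tau}$, and close the second identity with the Bochner formula. The only difference is that you explicitly justify the commutator $[\partial_t,\Delta]h=2R^{ij}\nabla_i\nabla_j h$ by showing $g^{ij}\partial_t\Gamma^k_{ij}=0$ via the contracted second Bianchi identity, a step the paper silently absorbs by writing $\Delta f$ as $g^{ij}\partial_i\partial_j f$; you also correctly read the lemma's final term as $-2\langle\nabla f,\nabla R\rangle$ (the printed $|\nabla R|^2$ is a typo).
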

  \begin{proof}
  By direct calculation and Proposition \ref{prop21}
   \begin{align*}
  \frac{\partial}{\partial t } ( \Delta f ) =   \frac{\partial}{\partial t  } ( g^{ij} \partial_i \partial_j f ) &  
         =  \frac{\partial}{\partial t } (g^{ij} ) \partial_i \partial_j f  + g^{ij} \partial_i \partial_j  \frac{\partial}{\partial t }f \\
   \displaystyle &=  2R^{ij}  \partial_i \partial_j f  + \Delta ( - \Delta f + | \nabla f|^2 - R +  \frac{n}{ 2 \tau } ) \\
    \displaystyle &=  2R^{ij}  \nabla_i \nabla_j f  -  \Delta( \Delta f) + \Delta | \nabla f|^2 - \Delta R
  \end{align*}
  then,
 \begin{align*}  
 \Big( \frac{\partial}{\partial t }  +\Delta \Big)  \Delta f  &= 2R^{ij}  \nabla_i \nabla_j f -  \Delta( \Delta f) + \Delta | \nabla f|^2 - \Delta R + \Delta( \Delta f)  \\
 \displaystyle &=  2R^{ij}  \nabla_i \nabla_j f + \Delta | \nabla f|^2 - \Delta R
  \end{align*}
  Part 1 is proved.
  \begin{align*}
  \frac{\partial}{\partial t } | \nabla f|^2 & =  2  R^{ij} \partial_i f \partial_j f  + 2 g^{ij} \partial_i f \partial_j \frac{\partial}{ \partial t} f \\ 
\displaystyle &  =  2  R^{ij} \partial_i f \partial_j f   + 2 \langle \nabla f, \nabla( - \Delta f + | \nabla f|^2 - R +  \frac{n}{ 2 \tau } )  \rangle  \\
\displaystyle &  =  2  R^{ij} \nabla_i f \nabla_j f  + 2 \langle \nabla f,  \nabla | \nabla f|^2 \rangle -  2 \langle \nabla f, \nabla  \Delta f  \rangle - 2 \langle \nabla f, \nabla  R \rangle
  \end{align*}
  then,
 \begin{align*}  
 \Big( \frac{\partial}{\partial t }  +\Delta \Big)  | \nabla f|^2  =  2  R^{ij} \partial_i f \partial_j f + 2 \langle \nabla f,  \nabla | \nabla f|^2 \rangle -  2 \langle \nabla f, \nabla  \Delta f  \rangle - 2 \langle \nabla f, \nabla  R \rangle + \Delta | \nabla f|^2.
 \end{align*}
 Using the Bochner identity
 $$ \Delta | \nabla f|^2 = 2 | \nabla \nabla f|^2 + 2 \langle \nabla f, \nabla  \Delta f  \rangle + 2 Rc ( \nabla f, \nabla f) $$ 
 we obtain the identity in part (2).
  \end{proof}
 
 \begin{proof} Proof of Theorem \ref{thm Heqn1}.
 Since $ P = 2 \Delta f - | \nabla f |^2 + R - \frac{2n}{\tau}$ and by direct computation and using Lemma \ref{lem332}, we have 
 
   \begin{align*}
 \Big( \frac{\partial}{\partial t }  +\Delta \Big) P &= 2  \Big( \frac{\partial}{\partial t }  +\Delta \Big) \Delta f -  \Big( \frac{\partial}{\partial t }  +\Delta \Big) |\nabla f |^2 +  \Big( \frac{\partial}{\partial t }  +\Delta \Big) R -   \frac{\partial}{\partial t } \Big(\frac{2n}{\tau} \Big) \\ 
\displaystyle &  =  4  R^{ij} \nabla_i  \nabla_j f  + 2  \Delta | \nabla f|^2 - 2\Delta R - 4 Rc( \nabla f, \nabla f) - 2 \langle \nabla f,  \nabla | \nabla f|^2 \rangle  \\
\displaystyle & \hspace{1.5cm} - 2 | \nabla \nabla f|^2 + 2 \langle \nabla f, \nabla R \rangle + 2 \Delta R + 2 | Rc|^2 + \frac{2n}{\tau^2} \\ 
\displaystyle &  =  4  R^{ij} \nabla_i  \nabla_j f  + 2 | Rc|^2  + \frac{2n}{\tau^2} - 2 \langle \nabla f,  \nabla | \nabla f|^2 \rangle + 2 \langle \nabla f, \nabla R \rangle \\
\displaystyle & \hspace{1.5cm}  + 2 \Delta  | \nabla f|^2 - 4 Rc ( \nabla f,  \nabla f) - 2|  \nabla  \nabla f|^2 \\ 
\displaystyle &  =  4  R^{ij} \nabla_i  \nabla_j f  + 2 | Rc|^2  + \frac{2n}{\tau^2} - 2 \langle \nabla f,  \nabla | \nabla f|^2 \rangle + 2 \langle \nabla f, \nabla R \rangle \\
\displaystyle & \hspace{1.5cm} + \Delta  | \nabla f|^2 - 2 Rc ( \nabla f,  \nabla f) + 2 \langle \nabla f,  \nabla  \Delta f  \rangle \\ 
\displaystyle &  =  4  R^{ij} \nabla_i  \nabla_j f  + 2 | Rc|^2  + \frac{2n}{\tau^2} + 2|  \nabla  \nabla f|^2 - 2 \langle \nabla f,  \nabla | \nabla f|^2 \rangle \\
\displaystyle & \hspace{1.5cm}+ 2 \langle \nabla f, \nabla R \rangle  + 4\langle \nabla f,  \nabla  \Delta f  \rangle \\ 
\displaystyle &  =  4  R^{ij} \nabla_i  \nabla_j f  + 2 | Rc|^2  + \frac{2n}{\tau^2} + 2|  \nabla  \nabla f|^2 + 2 \langle \nabla f,  \nabla P \rangle\\ 
\displaystyle & = 2 | R_{ij} + \nabla_i \nabla_j f |^2 +\frac{2n}{\tau^2}  + 2 \langle \nabla f,  \nabla P \rangle. 
\end{align*}
By direct computation we notice that 
\begin{align*}
 \Big| R_{ij} + \nabla_i \nabla_j f - \frac{1}{\tau} g_{ij} \Big|^2 =  | R_{ij} + \nabla_i \nabla_j f |^2 - \frac{2}{\tau}( R + \Delta f ) + \frac{n}{\tau^2},
\end{align*}
which implies
\begin{align*}
2 | R_{ij} + \nabla_i \nabla_j f |^2 + \frac{2n}{\tau^2} =  2 \Big| R_{ij} + \nabla_i \nabla_j f - \frac{1}{\tau} g_{ij} \Big|^2 + \frac{4}{\tau}( R + \Delta f ).
\end{align*}
Also
\begin{align*}
 \displaystyle \frac{4}{\tau}( R + \Delta f ) &= \frac{2}{\tau}( R + 2\Delta f ) + \frac{2}{\tau} R \\
 \displaystyle & = \frac{2}{\tau} P + \frac{2}{\tau} | \nabla f|^2 +  \frac{4n}{\tau^2}   + \frac{2}{\tau} R.
\end{align*}
Therefore, by putting these together we have 
\begin{align*}
 \Big( \frac{\partial}{\partial t }  +\Delta \Big) P =  2 \langle \nabla f,  \nabla P \rangle + 2 \Big| R_{ij} + \nabla_i \nabla_j f - \frac{1}{\tau} g_{ij}  \Big|^2 +\frac{2}{\tau} P + \frac{2}{\tau} | \nabla f|^2 +  \frac{4n}{\tau^2}   + \frac{2}{\tau} R,
\end{align*}
which proves the evolution equation for $P$.

 To prove that $ P \leq 0$ for all time $t \in [ 0, T]$, we know that for small $ \tau$, $ P(\tau) <0$. We can use the Maximum principle to conclude this. Notice that by the Perelman's $\mathcal{W}$-entropy monotonicity 
 $$ R_{ij} + \nabla_i \nabla_j f - \frac{1}{\tau} g_{ij} \geq 0$$
  and strictly positive except when $g(t)$ is a  shrinking   gradient soliton. So our conclusion will follow from a theorem in \cite[Theorem 4]{[CxZz]}. 
  
  For completeness we show this;
  by Cauchy-Schwarz inequality and the fact that $ R = g^{ij} R_{ij}$ and $ \sum_{i,j} g_{ij} = n$, we have 
 $$ | R_{ij} + \nabla_i \nabla_j f - \frac{1}{\tau} g_{ij} |^2 \geq \frac{1}{n} ( R + \Delta f - \frac{n}{\tau} )^2 $$
 and by definition of $P$
 $$ P + R + | \nabla f |^2 = 2 ( R + \Delta f - \frac{n}{\tau} ). $$

Hence
$$2 \Big| R_{ij} + \nabla_i \nabla_j f - \frac{1}{\tau} \Big|^2 \geq  \frac{1}{2n} (P + R + | \nabla f |^2)^2.$$
Putting the last identity into the evolution equation for $P$ yields
 \begin{align*}
 \frac{\partial P}{\partial t }  & \geq -  \Delta P + 2 \langle \nabla P, \nabla f \rangle + \frac{1}{2n} (P + R + | \nabla f |^2)^2 + \frac{2}{\tau } (P + R + | \nabla f |^2) + \frac{4 n}{\tau^2}\\
 \displaystyle &  = - \Delta P+ 2 \langle \nabla P, \nabla f \rangle +  \frac{1}{2n} (P + R + | \nabla f |^2 + \frac{2n }{\tau^2})^2  +  \frac{2n }{\tau^2 }.
  \end{align*}
This implies that 
 \begin{align*}
 \frac{\partial P}{\partial \tau  } \leq \Delta P- 2 \langle \nabla P, \nabla f \rangle - \frac{1}{2n} (P + R + | \nabla f |^2 + \frac{2n }{\tau^2})^2  -  \frac{2n }{\tau^2 }.
  \end{align*}
Then 
 \begin{align}\label{evq}
 \frac{\partial P}{\partial \tau  } \leq \Delta P- 2 \langle \nabla P, \nabla f \rangle.
  \end{align}
Applying the maximum principle to the evolution equation (\ref{evq}) yields clearly that $P \leq 0$ for all $\tau$, hence, for all $t \in [0, T).$
 \end{proof}
 The result here is an improvement on Kuang and Zhang's \cite{[KZh08]} since it holds with no assumption on the curvature.
 This result can also be compared with those of \cite{[CaH09],[Cao08]} where they define a general Harnack quantity for conjugate heat equation and derive its evolution under the Ricci flow.

\subsection{Main Result II. (Pointwise Harnack Estimates)}
The aim of this subsection is to state and proof the  Li-Yau type pointwise Harnack estimate corresponding to the Harnack inequality proved in the last subsection. We introduce some notations. Given $x_1 , x_2 \in M$ and $t_1, t_2 \in [0,T]$ satisfying $ t_1 < t_2$
$$ \Theta( x_1, t_1; x_2, t_2) = \inf_{\gamma} \int_{t_1}^{t_2} \Big| \frac{d}{dt} \gamma(t) \Big|^2 dt,$$
where the infimum is taken over all the smooth path $ \gamma :[t_1, x_2] \rightarrow M$ connecting $x_1$ and $x_2$. The norm $|.|$ depends on $t$. We now present a lemma which is crucial to the proof of our main result in this subsection.

\begin{lemma}
Let $( M, g(t))$ be a complete solution to the Ricci flow. Let $u: M \times [0,T] \rightarrow \mathbb{R}$ be a smooth positive solution to the heat equation (\ref{eq11}). Define $ f = \log u$ and assumed that 
$$ - \frac{ \partial f}{\partial t} \leq \frac{1}{ \alpha} \Big(  \frac{\beta}{ t} - | \nabla f|^2 \Big), \ \ \ ( x,t) \in M \times [0, T] $$ 
for some $ \alpha, \beta > 0$. Then, the inequality 
\begin{equation}\label{CLem}
u(x_2, t_2 ) \leq u( x_1, t_1) \Big ( \frac{t_2}{t_1}\Big)^{\frac{\alpha}{\beta}} \exp \Big( \frac{\alpha}{4} \Theta( x_1, t_1; x_2, t_2) \Big)
\end{equation}
holds for all $( x_1, t_1)$ and $(x_2, t_2)$ such that $ t_1 < t_2.$
\end{lemma}
\begin{proof}
Obtain the time differential of a function $f$ depending on the  path $\gamma$ as follows
\begin{align*}
\frac{d}{d t} f( \gamma(t), t) &= \nabla  f( \gamma(t), t) \frac{d}{d t} \gamma(t) - \frac{\partial }{\partial s}( \gamma(t), s) \Big|_{ s= t} \\
& \leq  \Big| \nabla f  \Big|  \Big|  \frac{d}{d t} \gamma(t) \Big| +  \frac{1}{ \alpha} \Big(  \frac{\beta}{ t} -  | \nabla f |^2 \Big) \\
& \leq  \frac{\alpha}{4} \Big|  \frac{d}{d t} \gamma(t)\Big|^2 +  \frac{\beta}{ \alpha t}.
\end{align*}
The last inequality was obtained by the application of completing the square method in form of a quadratic inequality satisfying $ax^2 - b x \geq - \frac{b^2}{ 4a} , \ \ ( a , b > 0 )$. Then integrating over the path from $ t_1$ to $ t_2$, we have 
\begin{align*}
 f ( x_2, t_2) - f( x_1, t_1) &= \int_{t_1}^{t_2} \frac{d}{d t} f( \gamma(t), t) dt \\ 
& \leq  \frac{\alpha}{4} \int_{t_1}^{t_2}  \Big|  \frac{d}{d t} \gamma(t)\Big|^2 dt +  \frac{\beta}{ \alpha } \log t \Big|_{t_1}^{t^2}.
\end{align*}
The required estimate ( \ref{CLem}) follows immediately after exponentiation.
\end{proof}

 We have the following as an immediate consequence of the above theorem
 \begin{corollary}\label{cor35} (Harnack Estimates).
 Let $ u \in C^{2,1} ( M \times[0,T))$ be a positive solution to the conjugate heat equation $ \Gamma^* u = 0 $ and $g(t), t \in [0,T)$ evolve by the Ricci flow on a closed manifold $M$ with nonnegative scalar curvature $R$. Then for any points $ (x_1, t_1)$  and $ (x_2, t_2)$ in  $ M \times (0,T)$ such that $ 0 < t_1 \leq t_2 < T$, the following estimate holds
 \begin{equation}
 \frac{u(x_2, t_2)}{u(x_1, t_1)} \leq \Big(  \frac{\tau_1}{\tau_2}\Big)^n \exp \Big[ \int_0^1 \frac{ | \gamma'(s) |^2 }{2 (\tau_1 - \tau_2)} ds + \frac{ (\tau_1 - \tau_2)}{2} R \Big],
 \end{equation}
  where $\tau_i =T - t_i, i =1,2$ and $\gamma : [0,1]$ is a geodesic  curve connecting points $x_1$ and $x_2$ in $M.$
 \end{corollary}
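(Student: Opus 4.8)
The plan is to convert the pointwise differential Harnack bound $P\le 0$ of Theorem~\ref{thm Heqn1} into a first-order estimate for $h:=\log u$ and then integrate it along a space-time path, exactly as in the lemma preceding this corollary. Since $u=(4\pi\tau)^{-n/2}e^{-f}$ gives $h=-f-\tfrac{n}{2}\log(4\pi\tau)$, so that $\nabla f=-\nabla h$ and $\Delta f=-\Delta h$, and since the conjugate equation $\partial_t u=-\Delta u+Ru$ yields $h_t=-\Delta h-|\nabla h|^2+R$, a direct substitution turns $P=2\Delta f-|\nabla f|^2+R-\tfrac{2n}{\tau}$ into
\[
P=2\,h_t+|\nabla h|^2-R-\frac{2n}{\tau}.
\]
Hence $P\le 0$ is equivalent to the one-sided bound
\[
h_t\le \frac12\Big(R+\frac{2n}{\tau}-|\nabla h|^2\Big),
\]
which is the analogue of the hypothesis of the lemma with $\alpha=2$ and with $\beta/t$ replaced by the time-dependent quantity $R+\tfrac{2n}{\tau}$.

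Next I would fix a curve $\gamma$ joining $x_1$ to $x_2$, run it over $[t_1,t_2]$, and differentiate $h$ along it:
\[
\frac{d}{dt}h(\gamma(t),t)=\langle\nabla h,\dot\gamma\rangle+h_t
\le \langle\nabla h,\dot\gamma\rangle-\tfrac12|\nabla h|^2+\tfrac12 R+\frac{n}{\tau}.
\]
Completing the square through $\langle\nabla h,\dot\gamma\rangle-\tfrac12|\nabla h|^2\le\tfrac12|\dot\gamma|^2$ gives the clean bound $\tfrac{d}{dt}h\le \tfrac12|\dot\gamma|^2+\tfrac12R+\tfrac{n}{\tau}$. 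Integrating from $t_1$ to $t_2$ and exponentiating then produces three contributions: the term $\int_{t_1}^{t_2}\tfrac{n}{\tau}\,dt=n\log(\tau_1/\tau_2)$ gives the prefactor $(\tau_1/\tau_2)^n$; the kinetic term $\int_{t_1}^{t_2}\tfrac12|\dot\gamma|^2\,dt$, minimized over paths by reparametrizing the connecting geodesic to constant speed and using Cauchy--Schwarz with $t_2-t_1=\tau_1-\tau_2$, reduces to $\tfrac{1}{2(\tau_1-\tau_2)}\int_0^1|\gamma'(s)|^2\,ds$; and the curvature term contributes $\tfrac{\tau_1-\tau_2}{2}R$. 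Collecting these and exponentiating is exactly the claimed inequality.

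The main obstacle, and the point I would be most careful to pin down, is the time-dependence of the metric in the energy term together with the curvature integral. Because only $R\ge 0$ is assumed, there is no sign control on $\mathrm{Ric}(\dot\gamma,\dot\gamma)=-\tfrac12\partial_t|\dot\gamma|_{g(t)}^2$, so $\int_{t_1}^{t_2}|\dot\gamma|_{g(t)}^2\,dt$ is not literally the energy of a single fixed geodesic; to arrive at the stated expression $\int_0^1|\gamma'(s)|^2\,ds$ one must fix a reference slice (say $g(t_1)$ or $g(t_2)$) for $\gamma$ and $|\gamma'|$, or invoke a uniform curvature bound to pass between slices. For the same reason the step $\int_{t_1}^{t_2}\tfrac12 R\,dt=\tfrac{\tau_1-\tau_2}{2}R$ should be read with $R$ a uniform bound (or supremum) of the scalar curvature along the path. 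I would therefore state these conventions explicitly at the outset; once they are fixed, the remaining completion-of-square and integration are routine and mirror the preceding lemma.
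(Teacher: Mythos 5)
Your proposal is correct and follows essentially the same route as the paper: rewrite $P\le 0$ as a one-sided bound on the time derivative of $\log u$, differentiate $\log u$ along a path joining $(x_1,t_1)$ to $(x_2,t_2)$, complete the square against $|\dot\gamma|^2$, integrate (the $n/\tau$ term producing the $(\tau_1/\tau_2)^n$ prefactor), and exponentiate; the paper merely parametrizes the path by $s\in[0,1]$ with $\tau(s)=(1-s)\tau_1+s\tau_2$ rather than by $t$. Your caveats about which time-slice metric measures $|\gamma'|$ and about reading $R$ as a supremum along the path are well taken---the paper leaves exactly these points implicit.
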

 \begin{proof}
 Let $\gamma : [0,1]$ be a minimizing geodesic connecting points $x_1$ and $x_2$ in $M$ such that $ \gamma(0) = x_1$ and  $ \gamma(1) = x_2$ with $ | \gamma'(s)|$ being the length of the vector  $\gamma'(s)$ at time $ \tau(s) = (1-s) \tau_1 + s \tau_2,  \ \ \ 0 \leq \tau_2 \leq \tau_1 \leq T.$
 Define $\eta(s) = \ln  u( \gamma(s),  (1-s) \tau_1 + s \tau_2)$. Clearly, $\eta(0) = \ln u(x_1, t_1)$ and $\eta(1) = \ln u(x_2, t_2).$
 
 Integrating along $ \eta(s)$, we obtain 
 $$ \ln u(x_2, t_2) - \ln u(x_1, t_1) = \int_0^1 \Big( \frac{ \partial }{\partial s} \ln u(\gamma(s), (1-s) \tau_1 + s \tau_2) \Big) ds $$ 
 i.e.,
 $$ \ln \Big(  \frac{u(x_2, t_2)}{u(x_1, t_1)} \Big) = \ \ln u( \gamma(t), t) \Big|_0^1 .$$
 By direct computation, we have on the path $  \gamma(s)$ that 
  \begin{align*}
  \frac{ \partial }{\partial s} \eta(s) = \frac{ d}{d s} \ln u &= \nabla \ln u \cdot \gamma'(s) +\frac{\partial }{\partial t} \ln u \\
  \displaystyle & =  \frac{\nabla u}{u}  \cdot \gamma'(s) - \frac{u_t (\tau_1 - \tau_2)}{u} \\
    \displaystyle & = (\tau_1 - \tau_2) \Big(   \frac{\nabla u}{u}  \cdot \frac{\gamma'(s)}{\tau_1 - \tau_2}   -  \frac{u_t }{u}  \Big).
   \end{align*}
   From Theorem \ref{thm Heqn1}, we have 
   $$ \frac{| \nabla u |^2}{ u^2} - 2 \frac{u_t}{u} \leq R + \frac{2n}{\tau}, $$ 
   which implies
   $$ - \frac{u_t}{u} \leq \frac{1}{2} ( R + \frac{2n}{ \tau} ) - \frac{| \nabla u |^2}{ 2 u^2}.$$
   By this, we have 
  \begin{align*} 
  \frac{d}{d s} \ln u &\leq (\tau_1 - \tau_2) \Big( \frac{\nabla u}{u} \cdot \frac{\gamma'(s)}{(\tau_1 -\tau_2)} - \frac{|\nabla u |^2}{2 u^2} + \frac{1}{2}( R + \frac{2 n}{ \tau}) \Big)  \\  
   \displaystyle &  = - \frac{(\tau_1 -\tau_2)}{2}\Big( \frac{\nabla u}{u} - \frac{\gamma'(s)}{(\tau_1 -\tau_2)} \Big)^2 \\
   & \hspace{1.5cm} + \frac{(\tau_1 -\tau_2)}{2} \frac{|\gamma'(s)|^2}{(\tau_1 -\tau_2)^2} + \frac{(\tau_1 -\tau_2)}{2}\Big( R + \frac{2 n}{ \tau}\Big)\\ 
    \displaystyle & \leq \frac{|\gamma'(s)|^2}{2 (\tau_1 -\tau_2)}
    + \frac{(\tau_1 -\tau_2)}{2} \Big( R + \frac{2 n}{ \tau}\Big). 
      \end{align*}
 Now integrating with respect to $s$, from $0$ to $1$, we have 
 \begin{equation}
 \ln u \Big|_0^1 \leq \int_0^1 \frac{|\gamma'(s)|^2}{2 (\tau_1 -\tau_2)} + \frac{(\tau_1 -\tau_2)}{2} \int_0^1 R ds + \ln \Big( \frac{\tau_1}{\tau_2} \Big)^n,
 \end{equation}
exponentiating both sides, we get 
  \begin{equation*}
 \frac{u(x_2, t_2)}{u(x_1, t_1)} \leq \Big(  \frac{\tau_1}{\tau_2}\Big)^n \exp \Big[ \int_0^1 \frac{ | \gamma'(s) |^2 }{2 (\tau_1 - \tau_2)} ds + \frac{ (\tau_1 - \tau_2)}{2} R \Big].
 \end{equation*}
 \end{proof}

 \section{Main Result III. (Localising the Harnack and Gradient Estimates)}\label{sec3}
 We establish a localised form of the Harnack and gradient estimates obtained in the last subsection. The main idea is the application of the Maximum principle  on some smooth cut-off function. It was also the basic idea used by Li and Yau in \cite{[LY86]}, this type of approach  has since become tradition. It has been systematically developed over the years since the paper of Cheng and Yau \cite{[CY75]}, see also \cite{[SY94],[Yau75]},  however our computation is more involved as the metric is also evolving.
 
 A natural function that will be defined on $M$ is the distance function from a given point, namely, let $ p \in M$ and define $d(x, p)$ for all $ x\in M,$ where $dist(\cdot, \cdot )$ is the geodesic distance. Note that $d(x, p)$ is only Lipschitz continuous, i.e., everywhere continuously differentiable except on the cut locus of $p$ and on the point where $x$ and $p$ coincide. It is then easy to see that 
$$ | \nabla d | = g^{ij} \partial_i d \ \partial_j d = 1 \ \ on \ \ \ M  \setminus \{ \{ p \}  \cup cut(p) \} .$$
Let $d(x, y, t)$ be the geodesic distance between $x$ and $y$ with respect to the metric $g(t)$,  we  define a smooth cut-off function $ \varphi(x, t)$  with support in the geodesic cube
$$ \mathcal{Q}_{ 2\rho, T} := \{ ( x, t) \in M \times (0, T] : d(x, p, t) \leq 2\rho \} ,$$
for any $C^2$-function $ \psi( s)$ on $[0, + \infty )$ with
\begin{equation*}
\psi(s) = 
\left \{ \begin{array}{l}
 \displaystyle 1 , \hspace{1.5cm} s \in [0,1],  \\
 \displaystyle 0,   \hspace{1.5cm} s \in [2,+ \infty) 
\end{array} \right.  
\end{equation*}
and 
$$ \psi'(s) \leq 0, \ \ \ \frac{| \psi'|^2}{\psi} \leq C_1   \ \ \ and \ \   |  \psi''(s)| \leq  C_2,$$
where $C_1, C_2$ are absolute constants depending only on the dimension of the manifold, such that 
$$ \varphi(x, t) = \psi \Big( \frac{d( x, p, t)}{\rho } \Big)  \ \ \ \ and  \ \  \ \varphi \Big|_{ \mathcal{Q}_{ 2\rho, T} } =1 .$$
We will apply the maximum principle and invoke Calabi's trick \cite{[Ca58]} to assume everywhere smoothness of $ \varphi(x, t)$ since $ \psi(s)$ is in general Lipschitz.
We need Laplacian comparison theorem to do some calculation on $\varphi(x, t)$. Here is the statement of the theorem; Let $M$ be a complete $n$-dimensional Riemannian manifold whose Ricci curvature is  bounded from below by $ Rc \geq (n-1)k $ for some constant $ k \in \mathbb{R}$. Then the Laplacian  of the distance function satisfies 
\begin{equation}\label{eq338}
\Delta d(x, p) \leq  
\left \{ \begin{array}{l}
 \displaystyle  (n-1) \sqrt{k} \cot ( \sqrt{k} \rho) , \hspace{1.8cm} \ k >0 \\ \ \\
 \displaystyle  (n-1) \rho^{-1},    \hspace{3.4cm}  k =0   \\ \ \\
 \displaystyle (n-1) \sqrt{|k|} \coth ( \sqrt{|k|} \rho) , \hspace{1.1cm} \   k < 0.
\end{array} \right.  
\end{equation}
For detail of the Laplacian comparison theorem see \cite[Theorem 1.128]{[CLN06]} or the book \cite{[SY94]}. We are now set to prove the localized version of the gradient estimate for the system (\ref{Heqn1}).
\begin{theorem} \label{thmL}
Let $ u \in C^{2, 1}(M \times [0,T])$ be a positive solution to the conjugate heat equation $ \Gamma^* u = ( - \partial_t - \Delta + R )u = 0$ defined in geodesic cube $ \mathcal{Q}_{ 2\rho, T}$ and the metric $g(t)$ evolves by the Ricci flow in the interval $[0, T]$ on a closed manifold $M$ with bounded Ricci curvature, say $ Rc \geq -Kg$, for some constant $K > 0$. Suppose  further that $u = ( 4 \pi \tau )^{-\frac{n}{2}} e^{-f}$, where $ \tau = T-t$, then for all points in $ \mathcal{Q}_{ 2\rho, T}$ we have the following estimate
 \begin{equation}\label{eq410}
 \frac{| \nabla u|^2}{ u^2} - 2 \frac{u_t}{u}- R  \leq \frac{4n}{ 1 - 4  \delta n} \Bigg \{ \frac{1}{\tau}  + C \Bigg( \frac{1}{\rho^2} + \frac{\sqrt{K}}{\rho} + \frac{K}{\rho} + \frac{1}{T}  \Bigg) \Bigg \},
  \end{equation}
 where $C$ is an absolute constant depending only on the dimension of the manifold and $\delta$ such that $\delta < \frac{1}{4n}.$
\end{theorem}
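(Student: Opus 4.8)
The plan is to localize the global argument behind Theorem \ref{thm Heqn1}: instead of running the maximum principle on all of $M$, I run it against a spatial cutoff supported in $\mathcal{Q}_{2\rho,T}$. The essential point is that here we only assume $Rc\geq -Kg$, so I cannot invoke Perelman's soliton inequality $R_{ij}+\nabla_i\nabla_j f-\frac{1}{\tau}g_{ij}\geq 0$; instead I keep the Cauchy--Schwarz consequence $|R_{ij}+\nabla_i\nabla_j f-\frac{1}{\tau}g_{ij}|^2\geq \frac{1}{n}(R+\Delta f-\frac{n}{\tau})^2$ already exploited at the end of that proof. Expressed through $P=2\Delta f-|\nabla f|^2+R-\frac{2n}{\tau}$ (so that the quantity to be estimated is $P+\frac{2n}{\tau}=\frac{|\nabla u|^2}{u^2}-2\frac{u_t}{u}-R$), this turns the evolution identity of Theorem \ref{thm Heqn1} into the differential inequality
\begin{equation*}
\Big(\frac{\partial}{\partial t}+\Delta\Big)P\geq 2\langle\nabla f,\nabla P\rangle+\frac{1}{2n}\Big(P+R+|\nabla f|^2+\frac{2n}{\tau}\Big)^2+\frac{2n}{\tau^2},
\end{equation*}
which is the engine of the estimate.

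Next I would introduce $G=\varphi P$, where $\varphi=\psi(d(x,p,t)/\rho)$ is the cutoff fixed before the statement, so that $\varphi\equiv 1$ on the inner cube $\mathcal{Q}_{\rho,T}$ and $\varphi$ vanishes outside $\mathcal{Q}_{2\rho,T}$; by Calabi's trick I treat $\varphi$ as smooth. Let $(x_0,t_0)$ be a point of $\bar{\mathcal{Q}}_{2\rho,T}$ at which $G$ attains its maximum. If that maximum is nonpositive the estimate is immediate on $\mathcal{Q}_{\rho,T}$, so assume $G(x_0,t_0)>0$ (note $\tau\to 0$ forces $P\to-\infty$, so $t_0\ne T$). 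At $(x_0,t_0)$ the conditions $\nabla G=0$, $\Delta G\leq 0$ and $\partial_\tau G\geq 0$ hold. From $\nabla G=0$ I get $\varphi\nabla P=-P\nabla\varphi$, which I substitute into the drift term $\langle\nabla f,\nabla P\rangle$ and into $\Delta G=\varphi\Delta P+2\langle\nabla\varphi,\nabla P\rangle+P\Delta\varphi$; combined with the displayed inequality rewritten in $\tau$, this reduces everything to an algebraic inequality at $(x_0,t_0)$ of the schematic form
\begin{equation*}
\frac{\varphi}{2n}\Big(P+R+|\nabla f|^2+\frac{2n}{\tau}\Big)^2+\frac{2n\varphi}{\tau^2}\leq \varphi_\tau P+\frac{2P|\nabla\varphi|^2}{\varphi}-P\Delta\varphi+2P\langle\nabla f,\nabla\varphi\rangle.
\end{equation*}

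It remains to control the right-hand side. The cutoff properties $\psi'\leq 0$, $|\psi'|^2/\psi\leq C_1$, $|\psi''|\leq C_2$ give $|\nabla\varphi|^2/\varphi\leq C_1/\rho^2$, while $\Delta\varphi=\frac{\psi'}{\rho}\Delta d+\frac{\psi''}{\rho^2}|\nabla d|^2$ is controlled by feeding the Laplacian comparison theorem (\ref{eq338}) with $Rc\geq -Kg$, i.e.\ $k=-K/(n-1)$, into $\Delta d$; this is the source of the curvature error terms $\frac{1}{\rho^2}$, $\frac{\sqrt{K}}{\rho}$ and $\frac{K}{\rho}$. The term $2P\langle\nabla f,\nabla\varphi\rangle$ is dangerous because it carries the uncontrolled gradient $\nabla f$; I absorb it into the quadratic $\frac{\varphi}{2n}(P+R+|\nabla f|^2+\frac{2n}{\tau})^2$ (which already contains $|\nabla f|^2$) by Young's inequality, retaining only a $\delta$-fraction of the good term. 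This absorption is exactly what forces $\delta<\frac{1}{4n}$, leaves the effective coefficient $\frac{1-4\delta n}{2n}$ on the square, and hence produces the prefactor $\frac{4n}{1-4\delta n}$. The genuinely new difficulty, absent in the fixed-metric Li--Yau/Cheng--Yau argument, is the term $\varphi_\tau=-\frac{\psi'}{\rho}\partial_t d$: the distance function itself evolves under the Ricci flow, so I must invoke the evolution formula for $d(x,p,t)$ and bound $\partial_t d$ along the minimizing geodesic using $Rc\geq -Kg$. Controlling this time variation of the cutoff uniformly is the step I expect to be the main obstacle, and it is the origin of the remaining $\frac{1}{T}$-type contribution.

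Finally, after multiplying through by $\varphi$ and using $\varphi\leq 1$, the collected bounds produce a quadratic inequality $a(\varphi P)^2\leq b(\varphi P)+c$ at $(x_0,t_0)$, with $a=\frac{1-4\delta n}{2n}$ and $c$ assembling the error terms $\frac{1}{\rho^2}$, $\frac{\sqrt K}{\rho}$, $\frac{K}{\rho}$, $\frac{1}{T}$ together with the $\frac{1}{\tau}$ coming from $\frac{2n}{\tau^2}$ and from $\varphi_\tau$. Solving this quadratic bounds $(\varphi P)(x_0,t_0)$, hence $\max_{\bar{\mathcal{Q}}_{2\rho,T}}G$, by $\frac{4n}{1-4\delta n}\{\frac{1}{\tau}+C(\frac{1}{\rho^2}+\frac{\sqrt K}{\rho}+\frac{K}{\rho}+\frac{1}{T})\}$. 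Since $\varphi\equiv 1$ on $\mathcal{Q}_{\rho,T}$, for any interior point one has $P=G\leq \max G$, and recalling $P+\frac{2n}{\tau}=\frac{|\nabla u|^2}{u^2}-2\frac{u_t}{u}-R$ yields exactly (\ref{eq410}).
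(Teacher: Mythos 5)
Your proposal follows essentially the same route as the paper: cut off $P=2\Delta f-|\nabla f|^2+R-\frac{2n}{\tau}$ with $\varphi=\psi(d(x,p,t)/\rho)$, apply the derivative test at an extremum of $\varphi P$, keep the Cauchy--Schwarz lower bound $|R_{ij}+\nabla_i\nabla_j f-\frac{1}{\tau}g_{ij}|^2\geq\frac{1}{n}(R+\Delta f-\frac{n}{\tau})^2$ as the good quadratic term, absorb the cross term $2P\langle\nabla f,\nabla\varphi\rangle$ by a $\delta$-Young inequality (whence $\delta<\frac{1}{4n}$ and the prefactor $\frac{4n}{1-4\delta n}$), and control $\Delta\varphi$ and $\partial_t\varphi$ by the Laplacian comparison theorem and the evolution of the distance function under the Ricci flow. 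The only difference is bookkeeping: you track the maximum of $\varphi P$ to get an upper bound directly, whereas the paper works with the minimum of $\tau\varphi P$ and a lower bound, but the quadratic-inequality argument and all the error terms are the same.
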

\begin{proof} 
Recall the evolution equation for the differential Harnack quantity  $$  P = 2 \Delta f -| \nabla f|^2 + R - \frac{ 2n}{\tau},  $$
   \begin{equation*}
  \frac{\partial }{\partial t} P \geq - \Delta P + 2 \langle \nabla f, \nabla P \rangle + 2 | R_{ij} + \nabla_i \nabla_j f - \frac{1}{\tau} g_{ij}|^2 + \frac{2}{\tau} P +  \frac{ 4n}{\tau^2} +  \frac{ 2}{\tau} |\nabla f|^2,
  \end{equation*}
  using the non negativity of the scalar curvature
  Multiplying the quantity $P$ by $t \varphi$, since $ \varphi$ 
is time-dependent we have at any
 point where $ \varphi \neq 0$ that
\begin{align*}
 \frac{1}{\tau } \frac{\partial }{\partial t}( \tau  \varphi P)& = \varphi  \frac{\partial P}{\partial t} +  \frac{\partial \varphi }{\partial t} P -\frac{ \varphi P}{\tau}\\
& \geq  \varphi \Big( - \Delta P + 2 \langle \nabla f, \nabla P \rangle + \frac{2}{\tau} P + \frac{ 4n}{\tau^2} +  \frac{ 2}{\tau} |\nabla f|^2 \Big) \\
& + 2 \varphi | R_{ij} + \nabla_i \nabla_j f - \frac{1}{\tau} g_{ij}|^2  +  \frac{\partial \varphi }{\partial t} P - \frac{ \varphi P}{\tau}\\ 
& =  - \Delta ( \varphi P) + 2 \nabla \varphi \nabla P + 2 \langle \nabla f, \nabla P \rangle \varphi + P( \Delta + \partial_t ) \varphi \\
& +   \frac{ 4n}{\tau^2} \varphi + \frac{ \varphi P}{\tau} + \frac{ 2}{\tau} \varphi  |\nabla f|^2 + 2 \varphi | R_{ij} + \nabla_i \nabla_j f - \frac{1}{\tau} g_{ij}|^2.  
\end{align*}
The last equality is due to derivative test on  $(\varphi P)$ at the minimum point as obtained in the condition (\ref{mincond}) below.
The approach is to estimate $ \frac{\partial}{\partial t}( \tau  \varphi P)$ at the point where minimum (or maximum) value for $(\tau  \varphi P)$ is attained and do some analysis at the minimum (or maximum) point. We know that the support of $(\tau  \varphi P)(x, t)$ is contained in $ \mathcal{Q}_{2\rho} \times [0, T]$ since 
$$ Supp ( \varphi)  \subset \mathcal{Q}_{ 2\rho, T} := \{ ( x, t) \in M \times (0, T] : d(x, p, t) \leq 2\rho \} .$$
Now  let $ (x_0, t_0)$ be a point in $\mathcal{Q}_{ 2\rho, T}$ at which $( \tau  \varphi P)$ attains its minimum value. At this point, we have to assume that $P$ is positive since if $P \leq 0$, we have the same estimate and $( \tau  \varphi P)(x_0, t_0) \leq 0$ implies $( \tau  \varphi P)(x, t) \leq 0$ for all $ x \in M$ such that the distance $d( x, x_0, t) \leq 2\rho $ and the theorem will follow trivially.

Note that at the minimum point $( x_0, t_0)$ we have by the derivative test that  $( 0 \leq \varphi \leq 1)$
 \begin{align}\label{eq215}
   \nabla ( \tau  \varphi P) (x_0, t_0) = 0 , \  \ \ \frac{\partial }{\partial t} ( \tau  \varphi P) (x_0, t_0) \leq 0 \ \ \ and \ \ \ \Delta ( \tau  \varphi P)(x_0, t_0) \geq 0.
    \end{align}
    We shall obtain a lower bound for $ \tau  \varphi P$ at this minimum point.
Therefore    
  \begin{align}\label{eq400}
 \left. \begin{array}{l}
 \displaystyle 0 \geq - \Delta ( \varphi P) + 2 \nabla \varphi \nabla P + 2 \langle \nabla f, \nabla P \rangle \varphi + P( \Delta + \partial_t ) \varphi +  \frac{  \varphi P}{\tau}  \\ 
 \displaystyle  \hspace{1cm} + \frac{ 4n}{\tau^2} \varphi + \frac{ 2}{\tau} \varphi  |\nabla f|^2 + 2 \varphi | R_{ij} + \nabla_i \nabla_j f - \frac{1}{\tau} g_{ij}|^2.  
\end{array} \right.
 \end{align} 
By the argument in (\ref{eq215}) and product rule we have
   $$ \nabla( \varphi P) (x_0, t_0) - P \nabla \varphi (x_0, t_0) = \varphi \nabla P (x_0, t_0) $$
   which means $ \varphi \nabla P $ can always be replaced by $ - P
\nabla \varphi$. Similarly, 
 \begin{align}\label{mincond}
   - \varphi \Delta P =   - \Delta ( \varphi P)  + P \Delta  \varphi + 2 \nabla \varphi \nabla P,
   \end{align} 
   which we have already used before the last inequality. 
   Notice that by direct calculation using product rule
$$  \nabla \varphi \nabla P =  \frac{ \nabla \varphi }{\varphi} \cdot \nabla( \varphi P ) - \frac{| \nabla \varphi|^2}{\varphi} P$$
and
$$ 2 \langle \nabla f, \nabla P \rangle \varphi  =  \langle \nabla f, \nabla( \varphi P) \rangle -  \langle \nabla f, \nabla \varphi \rangle   P. $$
Putting the last two equations into (\ref{eq400}) we have 
\begin{align*}
0&\geq  - \Delta ( \varphi P)   + 2  \frac{ \nabla \varphi }{\varphi} \cdot \nabla( \varphi P ) - 2  \frac{| \nabla \varphi|^2}{\varphi} P + 2 \langle \nabla f, \nabla( \varphi P) \rangle  -2  \langle \nabla f, \nabla \varphi \rangle   P  \\
& + P( \Delta + \partial_t ) \varphi   +  \frac{  \varphi P}{\tau} + \frac{ 4n}{\tau^2} \varphi + \frac{ 2}{\tau} \varphi  |\nabla f|^2  + 2 \varphi | R_{ij} + \nabla_i \nabla_j f - \frac{1}{\tau} g_{ij}|^2 .
\end{align*}
By using the argument in (\ref{eq215})
 \begin{align}\label{eq413}
 \left. \begin{array}{l}
 \displaystyle 0 \geq    - 2  \frac{| \nabla \varphi|^2}{\varphi} P    -2  \langle \nabla f, \nabla \varphi \rangle   P  + P( \Delta + \partial_t ) \varphi   +  \frac{\varphi P}{\tau}\\      
 \displaystyle      \hspace{1cm} + \frac{ 4n}{\tau^2} \varphi + \frac{ 2}{\tau} \varphi  |\nabla f|^2  + 2 \varphi | R_{ij} + \nabla_i \nabla_j f - \frac{1}{\tau} g_{ij}|^2 
\end{array} \right \}.
 \end{align} 
 Observe that for any $ \delta >0$,  
 $$  2  | \nabla f| | \nabla \varphi| P = 2 \varphi| \nabla f| \frac{| \nabla \varphi |}{\varphi} P  \leq \delta \varphi | \nabla f|^2 P + \delta^{-1} \frac{| \nabla \varphi|^2}{\varphi} P $$
 \begin{equation}\label{eq414}
   2  | \nabla f| | \nabla \varphi| P \leq \delta \varphi | \nabla f|^4 P  + \delta \varphi P^2 + \delta^{-1} \frac{| \nabla \varphi|^2}{\varphi} P 
  \end{equation}
  and also that 
  $$ | R_{ij} + \nabla_i \nabla_j f - \frac{1}{\tau} g_{ij}|^2  \geq \frac{1}{n} \Big( R + \Delta f - \frac{n}{\tau} \Big)^2. $$ 
  It is equally clear that 
  $$ P = 2 \Delta f - | \nabla f |^2 + R - \frac{ 2n}{\tau} = 2  \Big( R + \Delta f - \frac{n}{\tau} \Big)^2 - | \nabla f |^2 - R ,$$
  which implies
  $$( P + | \nabla f |^2 + R ) = 2  \Big( \Delta f + R - \frac{n}{\tau} \Big).$$
 Therefore
 $$ 2 \varphi | R_{ij} + \nabla_i \nabla_j f - \frac{1}{\tau} g_{ij}|^2  \geq \frac{ \varphi}{2 n} \Big(  P + | \nabla f |^2 + R \Big)^2. $$  
Notice also that 
\begin{align*}
( P + | \nabla f |^2 + R )^2 (y, s) & = (P + | \nabla f |^2 + R^+ - R^- )^2(y, s) \\
& \geq \frac{1}{2}(  P + | \nabla f |^2 + R^+)^2(y,s) - (R^-)^2(y,s) \\
& \geq \frac{1}{2}(  P + | \nabla f |^2 )^2(y,s) - (R^-)^2(y,s) \\
&  \geq \frac{1}{2}(  P^2 + | \nabla f |^4 )(y,s) - ( \sup_{\mathcal{Q}_{2 \rho, T}} R^-)^2 \\
&  \geq \frac{1}{2}(  P^2 + | \nabla f |^4 )(y,s) - n^2 K^2,
\end{align*} 
where we have applied some inequalities, namely; $2(a -b)^2 \geq a^2 - 2 b^2$ and $( a +b)^2 \geq a^2 + b^2 $ with $ a , b \geq 0$ and a lower bound assumption on Ricci curvature, $ R_{ij} \geq - K,$ which implies $R = -nK\ \implies R^- \leq nK$ and $R = -R^-$. 
 Hence
 \begin{equation}\label{eq415}
  2 \varphi | R_{ij} + \nabla_i \nabla_j f - \frac{1}{\tau} g_{ij}|^2  \geq  \frac{\varphi}{4 n}  P^2 + \frac{\varphi}{4 n} | \nabla f|^4 .  
   \end{equation}
  Whereever $ P <  0$, we then obtain from (\ref{eq413}) -  (\ref{eq415}) that 
  \begin{align*}
0& \geq   \Big( \frac{1 }{4 n} - \delta  \Big) \varphi P^2 + \Bigg \{ ( \delta^{-1} -  2)  \frac{| \nabla \varphi|^2}{\varphi}   + ( \Delta + \partial_t ) \varphi  + \frac{\varphi}{\tau}  \Bigg \} P \\ 
& \hspace{3cm} - \Big( \delta - \frac{1 }{4 n} \Big) \varphi | \nabla f|^4 + \frac{2 }{\tau} \varphi | \nabla f|^2 + \frac{ 4 n}{\tau^2} \varphi,
\end{align*}
using the inequality of the form $ m | \nabla f|^4 - n | \nabla f|^2 \geq -  \frac{n^2}{4 m}$ and
multiplying by $ \varphi$ again ($ \varphi \neq 0$), we have  a quadratic polynomial in $ ( \varphi P )$ which we use to bound $( \varphi P)$ in the following 
 \begin{align}\label{eq416}
 \left. \begin{array}{l}
 \displaystyle  \Big( \frac{1 }{4 n} - \delta  \Big) ( \varphi P )^2 +  \Bigg \{ ( \delta^{-1} - 2)  \frac{| \nabla \varphi|^2}{\varphi}   + ( \Delta + \partial_t ) \varphi  + \frac{\varphi}{\tau}  \Bigg \}  ( \varphi P) \\ \ \\
 \displaystyle  \hspace{3cm}  - \frac{ 4 n}{\tau^2} \Big( \frac{1}{ 1 - 4 n \delta } - 1 \Big) \varphi^2 \leq	 0.
\end{array} \right \}.
\end{align}
Note that if there is a number $x \in \mathbb{R}$ satisfying inequality $ px^2 + q x + r \leq 0, $ when $ p > 0, q > 0$ and $r <0$, then $ q^2 - 4pr > 0$ and we then have the bounds
$$ \frac{-q - \sqrt{q^2 - 4pr}}{ 2p} \leq x \leq  \frac{-q + \sqrt{q^2 - 4pr}}{ 2p}, $$
which clearly implies
$$ \frac{-q - \sqrt{ - 4pr}}{ p} \leq x \leq  \frac{q + \sqrt{- 4pr}}{ p}. $$
Now,  choosing $\delta$ such that $ \delta < \frac{1}{4n}$ and denoting 
$$ Z =  ( \delta^{-1} - 2)  \frac{| \nabla \varphi|^2}{\varphi}   + ( \Delta + \partial_t ) \varphi, $$
we obtain
$$ \tau_0 \varphi P \geq - \frac{4n}{ 1 - 4  \delta n} \Bigg \{ \tau_0 Z + \varphi + 4 \varphi\sqrt{ { \delta n}}  \Bigg \}.$$
Moreover, since $\tau_0 \leq \tau \leq T $ and $ 0 \leq \varphi \leq 1$, we have
$$ \tau P \geq - \frac{4n}{ 1 - 4  \delta n}  \Big \{ \tau Z + 1 + C_3 \Big \},$$
where $C_3$ depends on $n$ and $ \delta$.
It remains to estimate $Z$ 
via  appropriate choice of a cut function $ \varphi : M \times [0, T] \rightarrow [0, 1]$
 such that $ \frac{\partial }{\partial t} \varphi, \Delta \varphi$ and 
 $  \frac{| \nabla \varphi|^2}{\varphi} $ have appropriate upper bounds. 
 The main difficulty with this kind of approach lies in the fact that for any 
 cut-off function, one gets different kind of estimates and therefore the 
 cut-off function in use must be chosen so related to the result one is looking for.

Define a $ C^2$-function $ 0 \leq \psi \leq 1$, on $[0, \infty )$ satisfying 
$$ \psi'(s) \leq 0, \ \ \ \frac{| \psi'|^2}{\psi} \leq C_1 \ \ \ and \ \   |  \psi''(s)| \leq  C_2$$
and define $\varphi$ by 
$$ \varphi(x, t) = \psi \Big( \frac{d( x, x_0, t)}{\rho } \Big) $$
and we have the following after some computations
\begin{align*}
 \frac{| \nabla \varphi |^2}{ \varphi} = \frac{| \psi' |^2 \cdot | \nabla d|^2 }{\rho^2 \varphi} \leq \frac{C_2}{\rho^2},
  \end{align*}
and by the Laplacian comparison Theorem (\ref{eq338}) we have
\begin{align*}
 \Delta \varphi &= \frac{ \psi' \Delta d}{\rho} + \frac{ \psi'' | \nabla d |^2}{\rho^2}  \leq \frac{ C_1}{\rho}  \sqrt{K} + \frac{C_2}{\rho^2}
  \end{align*}
  
   Next is to estimate time derivative of $\varphi$: consider a fixed smooth path $\gamma :[a, b] \to M$ whose length at time $t$ is given by $d(\gamma) = \int_a^b |\gamma'(t)|_{g(t)} dr$, where $r$ is the arc length. Differentiating we get 
 $$ \frac{\partial}{\partial t} (d(\gamma)) = \frac{1}{2} \int_a^b \Big|\gamma'(t) \Big|^{-1}_{g(t)} \frac{\partial g}{\partial t} \Big(\gamma'(t), \gamma'(t)\Big) dr = \int_\gamma Rc(\xi, \xi) dr,$$
 where $\xi$ is the unit tangent vector to the path $\gamma$. For detail see \cite[Lemma 3.11]{[CK04]}. Now  
\begin{align*}
\frac{\partial}{\partial t} \varphi = \psi' \Big(\frac{d}{\rho} \Big) \frac{1}{\rho} \frac{d}{ dt} (d(x, p, t)) & = \psi' \Big(\frac{d}{\rho} \Big) \frac{1}{\rho} \int_{\gamma} Rc ( \xi(s), \xi(s) ) ds \\  & \leq \frac{\sqrt{C_1}}{\rho}  \psi^{\frac{1}{2}} K.
\end{align*}
Therefore 
$$ Z  \leq \frac{C_2'}{\rho^2} + \frac{ C_1}{\rho}  \sqrt{K} + \frac{\sqrt{C_1}}{\rho} K + \frac{C_2}{\rho^2},$$
where $C_2'$ depends on $n$ and $ \delta.$ 
Hence
\begin{align*}
\varphi P \geq  - \frac{4n}{ 1 - 4  \delta n}  \Bigg \{ \frac{1}{\tau}  + C \Bigg( \frac{1}{\rho^2} + \frac{\sqrt{K}}{\rho} + \frac{K}{\rho} + \frac{1}{\tau}  \Bigg) \Bigg \},
\end{align*}
where $ C = \max \{C_1, C_2,  C_3 \}.$ The required estimate follows since both minimum and maximum points for $( \varphi P)$  are contained in the cube $\mathcal{Q}_{2\rho, T}.$
\end{proof}

\section{Concluding Remarks}
Our main results in Section \ref{sec2} hold for all positive solutions  and calculations 
are done without recourse to reduced length, therefore, they can be seen 
as  improvement on Perelman's which only works for the fundamental 
solution via his reduced distance function.  After a simple modification and $\epsilon $
 regularisation  method we can get a corresponding result for heat kernel-type function, namely, 
 if the function $ u(x,t)$ is defined on $ M \times (0, T]$ instead of $ M \times [0, T]$, 
 it suffices to replace $u(x, t)$ and $g(x,t)$ with $u(x, t+ \epsilon )$ and $g(x,t + \epsilon )$ 
 for a sufficiently small $\epsilon  >0$, do similar analysis and later send $\epsilon $ to $0$.
The local estimate in
Section \ref{sec3}  is desirable to extend our result to the case the manifold is noncompact,
for example, in the local monotonicity formula and mean value theorem 
considered in \cite{[EKNT]} a local version is needed. The  estimates 
obtained here are used to prove on diagonal and gaussian-type upper bound for 
heat kernel under a mild assumption on curvature and a technical lemma 
involving the best constant in the Sobolev embedding. This will be announced 
in a forthcoming paper. 

\section*{Appendix} 
\subsection*{Elements of the Ricci Flow}  Given an $n$-dimensional Riemannaian manifold $M$ endowed with metric $g$. In local coordinate $ \{ x^i \}, 1 \leq i \leq n $, we can write the metric in component form
$$ ds^2 = g = g_{ij} dx^i d x^j. $$
Consider a smooth function $f$ defined on $M$, then, the Laplace-Beltrami operator acting on $f$ is defined by 
$$ \Delta f = \frac{1}{ \sqrt{ |g|} }\sum_{i,j}^n \frac{\partial}{\partial x^i} \Big( \sqrt{|g|} g^{ij} \frac{\partial}{\partial x^j} f \Big)  = g^{ij} \Big( \partial_i \partial_j f - \Gamma_{ij}^k \partial_k f \Big),$$
where $( g^{ij} ) = ( g_{ij})^{-1} $ is the metric inverse,  $|g|$ is the matrix determinant of $( g^{ij} )$ and $ \Gamma_{ij}^k$ are the Christoffel's symbols.
The degenerate parabolic partial differential equation
$$ \frac{\partial }{ \partial t} g_{ij} = - 2 R_{ij} $$
is the Ricci flow on $(M, g(t))$, where $R_{ij}$ is the component of the Ricci curvature tensor and $g(t)$ is a one-parameter family of Riemannian metrics. The degeneracy of the pde is due to the group of differomorphism invariance, but we are sure of the existence of solution at least for short a time (Hamilton \cite{[Ha82]}). 

Interestingly all geometric quantities associated with $M$ also evolve along the Ricci flow, in the present study we have made use of the following evolutions
\begin{align*}   
 \displaystyle & metric \ inverse :  \ \ \ \ \ \ \ \ \ \ \  \frac{\partial}{\partial t } g^{ij} = 2 R^{ij}\\
 \displaystyle  &  volume \ element : \  \ \ \ \ \ \ \ \ \   \frac{\partial}{\partial t } d \mu \   = - R d \mu \\
  \displaystyle &  Scalar \ curvature : \ \ \ \ \ \ \ \ \frac{\partial}{\partial t} R  =  \Delta R + 2 | R_{ij}|^2 \\
   \displaystyle & Laplacian:  \ \ \ \  \ \ \ \ \ \ \ \ \ \ \ \ \ \ \frac{\partial}{\partial t}  \Delta_{g(t)}  =  2 R_{ij} \cdot \nabla^i \nabla^j \\
    \displaystyle & Christoffel's  \ symbols: \   \frac{\partial}{\partial t } \Gamma^k_{ij} =  - g^{kl} \Big( \partial_i R_{jl} +  \partial_j R_{il} -   \partial_l R_{ij} \Big),
\end{align*}
where $R_{ij}$ is the Ricci curvature ($R = g^{ij} R_{ij}$) and $ \sum_{ij} g^{ij} = n$. 
The metric is bounded under the Ricci flow (Cf. \cite{[CCG$^+$07],[CK04],[CLN06]}, for  more on this and detail of geometric and analytical aspect of the Ricci flow).

\section*{Acknowledgment}
The author  wishes to  acknowledge his PhD thesis advisor Prof. Ali Taheri for constant encouragement. He also thanks the anonymous reviewers for their valuable comments.
His research is supported by TETFund of Federal Government of Nigeria and University of Sussex, United Kingdom.

\end{document}